\documentclass[a4paper,11pt]{article}
\usepackage[T1]{fontenc}
\usepackage{lmodern,amsmath,amsfonts,amssymb,graphicx,float,microtype,underscore,mathtools,xurl,subcaption}
\usepackage[dvipsnames,svgnames,table]{xcolor}
\usepackage[labelsep = period, justification = centering]{caption}
\usepackage[shortlabels]{enumitem}
\setlist[itemize]{topsep=0ex,itemsep=0ex,parsep=0ex}
\setlist[enumerate]{topsep=0ex,itemsep=0ex,parsep=0ex}
\usepackage[unicode=true]{hyperref}
\hypersetup{
colorlinks,
breaklinks=true,
linkcolor={blue!60!black},
citecolor={black},
urlcolor={blue!60!black},
pdftitle={The Erdős--Sós Theorem}}
\usepackage[capitalise, compress, nameinlink, noabbrev]{cleveref}
\usepackage[longnamesfirst,numbers,sort&compress]{natbib}
\makeatletter
\def\NAT@spacechar{~}
\makeatother
\usepackage[tmargin=30mm,bmargin=30mm,lmargin=30mm,rmargin=30mm]{geometry}
\renewcommand{\baselinestretch}{1.1}
\renewcommand{\epsilon}{\varepsilon}
\renewcommand{\emptyset}{\varnothing}

\renewcommand{\geq}{\geqslant}
\renewcommand{\leq}{\leqslant}
\renewcommand{\thefootnote}{\fnsymbol{footnote}}
\usepackage{amsthm,thmtools}

\declaretheoremstyle[
spaceabove = .2\baselineskip plus .2\baselineskip minus .2\baselineskip, 
spacebelow = .5\baselineskip plus .2\baselineskip minus .2\baselineskip,
headfont = \normalfont\itshape,
notefont = \mdseries, 
notebraces = {}{},
bodyfont = \normalfont,
postheadspace = .5em,
headpunct = .,
qed = \qedsymbol
]{proofstyle}

\declaretheorem[name = Theorem, style = plain]{thm}

\declaretheorem[name = Lemma, numberlike = thm, style = plain]{lem}

\crefname{thm}{Theorem}{Theorems}
\crefname{lem}{Lemma}{Lemmas}
\crefname{obs}{Observation}{Observations}
\crefname{conj}{Conjecture}{Conjectures}
\crefname{claim}{Claim}{Claims}
\crefname{prob}{Problem}{Problems}
\crefname{prop}{Proposition}{Propositions}
\crefname{cor}{Corollary}{Corollaries}
\crefname{rmk}{Remark}{Remarks}
\crefname{appendix}{Appendix}{Appendices}
\begin{document}
\title{\bfseries\fontsize{18pt}{20pt}\selectfont The Erdős--Sós Theorem}

\author{David~R.~Wood\,\footnotemark[2]}

\maketitle

\begin{abstract}
We present an exposition of a proof, discovered by GPT-6 Astra, of the Erdős--Sós Conjecture, which states that every graph with average degree greater than $t-2$ contains every tree on $t\geq 2$ vertices.
\end{abstract}

\footnotetext[2]{School of Mathematics, Monash University, Melbourne, Australia (\textsf{\href{mailto:david.wood@monash.edu}{david.wood@monash.edu}}). Research supported by the Australian Research Council and by NSERC. }

\renewcommand{\thefootnote}{\arabic{footnote}}
\section{Introduction}

The following result was conjectured by Paul Erdős and Vera Sós at a conference in Smolenice in 1963 \citep{Erdos64} as a natural generalisation of the earlier Erdős--Gallai Theorem for paths~\citep{ErdosGallai59}\footnote{We consider finite undirected graphs $G$ with vertex-set $V(G)$ and edge-set $E(G)$. A graph $G$ \emph{contains} a graph $H$ if a subgraph of $G$ is isomorphic to $H$.}. The conjecture was recently proved by GPT-6 Astra \citep{ErdosSosSolution} as part of the \textsc{FrontierMath Erdős} project \citep{FrontierMathErdos} and 
formally verified in Lean~\citep{Adamczewski26}.

\begin{thm}
\label{ErdosSosTheorem}
For every integer $t\geq 2$, every graph with average degree greater than $t-2$ contains every tree with $t$ vertices. 
\end{thm}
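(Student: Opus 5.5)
We argue by contradiction, choosing a minimal counterexample. Fix $t$ and a tree $T$ on $t$ vertices. Among all graphs with average degree greater than $t-2$ that do not contain $T$, choose $G$ with $|V(G)|+|E(G)|$ minimum.

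\textbf{Step 1: reduce to large minimum degree.} If some vertex $v$ has $\deg(v)\leq (t-2)/2$, then $|E(G-v)| > (t-2)(|V(G)|-1)/2$. So $G-v$ is a smaller graph with average degree greater than $t-2$, contradicting minimality. Hence $\delta(G) > (t-2)/2$. By the same minimality, deleting any edge drops the average degree to at most $t-2$, so $|E(G)| = \lfloor (t-2)|V(G)|/2\rfloor+1$. Since $G$ has average degree greater than $t-2$, it also has a vertex of degree at least $t-1$. It also helps to pass to a single connected component achieving the average.

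\textbf{Step 2: the embedding strategy.} Let $\ell$ be a leaf of $T$ with neighbour $p$, and set $T'=T-\ell$. I would not induct on $T$ alone. Instead I would strengthen the statement to a \emph{rooted} version: for every vertex $x$ of $G$ with $\deg(x)\geq t-1$ and every vertex $r$ of $T$, there is an embedding of $T$ sending $r$ to $x$, provided $G$ satisfies the edge-density hypothesis locally around $x$. The intended mechanism is to embed $T'$ greedily by a breadth-first extension from a high-degree vertex. Every vertex of degree at least $t-1$ can always host a new child, because at most $t-2$ vertices are already used. The difficulty is that vertices of degree between $(t-2)/2$ and $t-2$ may run out of room. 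So we need to route the embedding so that the vertices of $T$ with many children land on high-degree vertices of $G$. For this I would use a bipartite-style partition: $T$ has a proper $2$-colouring into classes $A,B$. Let $H=\{v:\deg(v)\geq t-1\}$. The density condition should force enough structure between $H$ and its neighbourhood, for example a subgraph in which the $H$-side has degree at least $t-1$ and the other side has degree at least roughly $(t-1)/2$. Then the class among $A,B$ containing the vertices of larger total branching is placed into $H$.

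\textbf{Step 3: handling the failure case.} If the greedy or alternating embedding gets stuck at a low-degree vertex $u$, all neighbours of $u$ are already used by the partial embedding of $T'$. I would exploit this local saturation by a discharging or edge-counting argument. The used set $S$, with $|S|\leq t-1$, together with its neighbourhood yields a piece of $G$ whose removal does not decrease the average degree below $t-2$. That contradicts minimality of $G$. Concretely, I would show $e(S)+e(S,V\setminus S)\leq (t-2)|S|/2$ whenever the embedding is stuck, so that $G-S$ remains a counterexample.

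\textbf{Main obstacle.} The hard step is Step 3: proving the counting inequality when the stuck configuration has several medium-degree vertices. This is exactly where the classical partial results (for spiders, for trees of small diameter, and for graphs without $C_4$) break down. My first attempt would be to choose, among all maximal partial embeddings, one that maximises the number of tree vertices mapped into $H$. I would then use switching along alternating paths, in the style of Kőnig's theorem, to move branching vertices onto $H$, and derive the density inequality from the nonexistence of an improving switch.
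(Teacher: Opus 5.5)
Your proposal is a programme rather than a proof. Step~1 is correct but routine. Everything after it is phrased as intention (``should force'', ``I would show'', ``my first attempt would be''), and Step~3, which you flag as the obstacle, is the whole difficulty of the conjecture. Moreover, the concrete claims in Steps~2 and~3 are false as stated.

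Take $t=5$ and $G=K_{2,7}$ with small side $\{x,x'\}$. It has every property derived in Step~1: average degree $28/9>3$, $|E(G)|=14=\lfloor 3\cdot 9/2\rfloor+1$, minimum degree $2>3/2$, connected, and $\deg(x)=7\geq t-1$. Indeed, every proper subgraph has average degree at most $3$. Yet $K_{1,4}$ cannot be embedded with a leaf at $x$, since every neighbour of $x$ has degree $2<4$. So your rooted strengthening is false as stated, and you never say what ``local'' hypothesis would exclude this case.

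The attempted embedding gets stuck with $S=\{x,x',y\}$ for a vertex $y$ of degree $2$, and $e(S)+e(S,V\setminus S)=14>9/2=(t-2)|S|/2$. In general your embedding starts at $x\in S$, so $e(S)+e(S,V\setminus S)\geq\deg(x)$, and being stuck gives no upper bound on $\deg(x)$. Hence the Step~3 inequality does not follow from the stuck configuration, and deleting $S$ need not preserve the density.

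The structure posited in Step~2 is not forced by density either. A $2$-edge-connected $(t-2)$-regular graph plus one new edge has all the Step~1 properties but $|H|=2$, so for $t\geq 6$ no vertex has as many as $(t-1)/2$ neighbours in $H$. In such nearly regular hosts greedy embedding has no slack. Any extra structure would have to come from $T\not\subseteq G$, i.e.\ from the theorem itself, and nothing in the proposal indicates how alternating-path switching would supply it.

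The missing idea, which is how the paper proceeds, is to replace pointwise embedding and minimal counterexamples by an averaged rooted count over vertex orderings. Let $M(G)$ be the set of pairs $(\pi,i)$ where $\pi=(v_1,\dots,v_n)$ is an ordering of $V(G)$ and $v_1v_i\in E(G)$, so $|M(G)|=2m(n-1)!$. Let $R(T,r)\subseteq M(G)$ consist of those pairs for which $G[\{v_1,\dots,v_i\}]$ contains $T$ with $r$ mapped to $v_1$. The paper shows $|M(G)|\leq |R(T,r)|+(t-2)n!$ for every $r\in V(T)$ by induction on $t$. It uses two injections, each discarding at most one pair per ordering, hence at most $n!$ pairs.
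If $r$ is a leaf with neighbour $r'$, reversing the prefix $v_1,\dots,v_i$ maps $R(T-r,r')$, minus the pair with least $i$ for each $\pi$, into $R(T,r)$.
If $r$ is not a leaf and $T=T_1\cup T_2$ with $V(T_1\cap T_2)=\{r\}$, let $j$ be least with $(\pi,j)\in R(T_1,r)\setminus R(T,r)$. Moving the block $v_2,\dots,v_j$ to just after $v_i$ maps the remaining pairs of $R(T_1,r)\setminus R(T,r)$ into $M(G)\setminus R(T_2,r)$.
If $T\not\subseteq G$ then $R(T,r)=\emptyset$, giving $2m(n-1)!\leq (t-2)n!$, i.e.\ average degree at most $t-2$.
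Here the root is the first vertex of a random ordering and the host is a random prefix. So one never has to find a good root, route branching vertices onto high-degree vertices, or delete a stuck set, which are precisely the steps where your plan has no argument.
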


This paper provides an exposition of the proof of \cref{ErdosSosTheorem} and some background on the conjecture. 

First note that a greedy embedding algorithm shows that for every integer $t\geq 2$, every graph with \emph{minimum} degree at least $t-1$ contains every tree with $t$ vertices.  Every graph with average degree at least $2t-2$ contains a subgraph with minimum degree at least $t-1$.   So every graph with average degree at least $2t-2$ contains every tree with $t$ vertices.  The point of \cref{ErdosSosTheorem} is to improve this average degree bound to $>t-2$, which is best possible, since the complete graph on $t-1$ vertices has average degree $t-2$ and contains no tree on $t$ vertices. 

In the 1990s, Ajtai, Koml\'os, Simonovits and Szemer\'edi announced a proof of the Erdős--Sós Conjecture for all sufficiently large $t$ (see~\citep{AKSS15}). The proof, based on the Szemer\'edi Regularity Lemma, is long and complicated, and has never been written in full (as far as I am aware). The Erd\H{o}s--S\'os Conjecture came to be recognised by the graph theory community as an important and difficult problem. Indeed, \citet[Section~3.5.1]{CG98} described it as ``one of the most tantalizing problems in extremal graph theory''. As a reflection of this importance, many partial results were obtained, including verifications of the conjecture 
for various families of trees \citep{Pokrovskiy24,Rozhon19,McLennan05,FS07,Sun07,Fan13,Sidorenko89,BPSS21}, 
for various families of host graphs \citep{ReedStein26b,WLL00,Haxell01a,BD96,SW97,ET10,ET13,YL04}, and
for various restrictions on the relative sizes of the tree and the host graph~\citep{Zhou84,Tiner10,Wozniak96,ReedStein26a,GZ16,ReedStein25,STY85}. 
Approximate variants of the Erd\H{o}s--S\'os Conjecture and related degree-condition conjectures have also been widely studied~\citep{DPRS26,DPRS23,BPS20,HRSW20,BPS19,HPSS15,HKPSSS-I,HKPSSS-II,HKPSSS-III,HKPSSS-IV,PiguetStein12}; see \citep{Stein20} for a survey.
 
%{L}oebl--{K}oml{\'o}s--{S}{\'o}s conjecture 1995: Every $n$-vertex graph with at least $n/2$ vertices of degree more than $t-2$ contains every tree on $t$ vertices   \citep{HPSS15,HKPSSS-I,HKPSSS-II,HKPSSS-III,HKPSSS-IV,PiguetStein12}.

% Brown–Erdős–Sós conjecture \citep{BES73}

We finish this introduction with a known \citep{Stein20,ReedStein26a,BPS21} application of the Erdős--Sós Theorem to multicolour Ramsey numbers. For a graph $G$ and integer $k\geq 1$, let $R(G;k)$ be the minimum integer $n$ such that for every edge-$k$-colouring of the complete graph $K_n$ some monochromatic subgraph of $K_n$ is isomorphic to $G$. \citet{ErdosGraham75} showed that $R(T; k) \geq k(t-2) + 2$ for any tree $T$ on $t\geq 2$ vertices and for sufficiently large $k\equiv 1\pmod{t}$. They asked whether $R(T; k) \leq kt + O(1)$. 

\begin{thm}
For every tree $T$ on $t\geq 2$ vertices, and for every integer $k\geq 1$, 
\[R(T,k) \leq k(t-2)+2.\]
\end{thm}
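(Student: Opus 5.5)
We derive this from \cref{ErdosSosTheorem} by pigeonhole on the colour classes. Let $n = k(t-2)+2$ and fix an arbitrary edge-$k$-colouring of $K_n$. Write $G_1,\dots,G_k$ for the spanning subgraphs of $K_n$ formed by the edges of each colour. Our aim is to find some $i$ with $|E(G_i)| > \tfrac{(t-2)n}{2}$. Such a $G_i$ has average degree $2|E(G_i)|/n > t-2$, so \cref{ErdosSosTheorem} gives a copy of $T$ in $G_i$. That copy is a monochromatic subgraph of $K_n$ isomorphic to $T$.

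The key step is the edge count. We have
\[
\binom{n}{2} = \frac{n(n-1)}{2} = \frac{n\,(k(t-2)+1)}{2} = k\cdot\frac{(t-2)n}{2} + \frac{n}{2} .
\]
So the average colour class has $\tfrac{(t-2)n}{2} + \tfrac{n}{2k}$ edges. By pigeonhole, some $G_i$ has at least this many edges. Since $n/(2k)>0$, this is strictly more than $(t-2)n/2$, which is exactly what we need.

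We also check the degenerate cases. For $t=2$, $T$ is a single edge and $n=2$, so the single edge of $K_2$ works; the general argument covers this as well. For $k=1$, we get $n=t$ and $K_t$ contains $T$, which is again consistent with the general argument.

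There is no real obstacle beyond applying \cref{ErdosSosTheorem}. The only point needing care is that the pigeonhole bound gives a \emph{strict} inequality, matching the strict hypothesis "greater than $t-2$". This is exactly why the bound is $k(t-2)+2$ rather than anything smaller: with $n=k(t-2)+1$ we would get $\binom n2 = k\cdot\tfrac{(t-2)n}{2}$, and strictness would fail.
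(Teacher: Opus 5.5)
Your proof is correct and is essentially the paper's argument. The paper phrases it as a contrapositive: with no monochromatic $T$, each colour has at most $(t-2)n/2$ edges, so $\binom{n}{2}\leq k(t-2)n/2$, which forces $n\leq k(t-2)+1$. Your direct pigeonhole version, which checks the strict inequality explicitly, is the same computation.
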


\begin{proof}
Consider any edge-$k$-colouring of $K_n$ where $n=k(t-2)+2$. 
If there is no monochromatic copy of $T$, then each colour occurs at most $(t-2)n/2$ times by \cref{ErdosSosTheorem}, 
implying $\binom{n}{2}\leq k(t-2)n/2$ and $n \leq k(t-2)+1$. Thus there is a monochromatic copy of $T$.
\end{proof}

 %%%%%%%%%%
\section{The Proof}

Let $G$ be a graph with $n$ vertices and $m$ edges. 
Let $L(G)$ be the set of all linear orderings of $V(G)$. 
Let $M(G)$ be the set of all pairs $(\pi,i)$ where $\pi=(v_1,\dots,v_n) \in L(G)$ and $v_1v_i\in E(G)$ (implying $i\in\{2,\dots,n\}$). 
For each edge $vw\in E(G)$ there are $2(n-1)!$  pairs $(\pi,i)\in M(G)$ with $\{v,w\}=\{v_1,v_i\}$. 
So 
\begin{equation}
\label{MainEquality}
|M(G)|= 2m(n-1)!
\end{equation}
Let $T$ be a tree with $t\geq 2$ vertices. 
For  $r\in V(T)$, let $R(T,r)$ be the set of all pairs $(\pi,i)\in M(G)$ such that $G[\{v_1,\dots,v_i\}]$ contains $T$ with $r$ mapped to $v_1$. 
Below we prove
\begin{equation}
\label{MainInequality}
|M(G)| \leq |R(T,r)| + (t-2)n!.
\end{equation}
Together, \eqref{MainEquality} and \eqref{MainInequality} imply $2m(n-1)! \leq |R(T,r)| + (t-2)n!$. If $G$ does not contain $T$, then $R(T,r)=\emptyset$ for any $r\in V(T)$, implying
$2m \leq (t-2)n$; that is, $G$ has average degree $\frac{2m}{n}\leq t-2$. 
Equivalently, if $G$ has average degree greater than $t-2$, then $G$ contains $T$. 
This proves \cref{ErdosSosTheorem}.

It remains to prove \eqref{MainInequality}. We use the next two lemmas. 

\begin{lem}
\label{Leaf}
For any graph $G$ with $n$ vertices, for any tree $T$ with at least three vertices, if $r$ is a leaf of $T$ and $r'$ is the neighbour of $r$ in $T$, then 
\[ |R(T-r,r') | \leq |R(T,r)| + n!.\]
\end{lem}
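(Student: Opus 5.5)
The plan is to build, for each ordering $\pi$, an injection from all but at most one of the pairs $(\pi,i)\in R(T-r,r')$ into $R(T,r)$. The injection swaps the first and $i$-th vertices of $\pi$. Since there are $n!$ orderings, this gives the bound $|R(T-r,r')|\leq |R(T,r)|+n!$.

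Fix $\pi=(v_1,\dots,v_n)\in L(G)$. Containment of $T-r$ with $r'$ mapped to $v_1$ is monotone in the prefix. If $G[\{v_1,\dots,v_i\}]$ contains $T-r$ with $r'\mapsto v_1$, then so does $G[\{v_1,\dots,v_j\}]$ for every $j\geq i$. Let $i_0=i_0(\pi)$ be the least index such that $G[\{v_1,\dots,v_{i_0}\}]$ contains $T-r$ with $r'\mapsto v_1$, if such an index exists. Then every $(\pi,i)\in R(T-r,r')$ has $i\geq i_0$. We declare $(\pi,i_0)$ exceptional if it lies in $M(G)$. So each $\pi$ contributes at most one exceptional pair, and there are at most $n!$ exceptional pairs in total.

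Now take a non-exceptional $(\pi,i)\in R(T-r,r')$, so $i>i_0$ and $v_1v_i\in E(G)$. Since $i-1\geq i_0$, there is an embedding $\phi$ of $T-r$ into $G[\{v_1,\dots,v_{i-1}\}]$ with $\phi(r')=v_1$. In particular $\phi$ does not use $v_i$. Extend $\phi$ by $r\mapsto v_i$. This is a valid embedding of $T$, because $r$ is a leaf whose only neighbour $r'$ is mapped to $v_1$, and $v_1v_i\in E(G)$. Let $\sigma$ be $\pi$ with the entries in positions $1$ and $i$ swapped. Then:
\begin{itemize}
\item the first $i$ entries of $\sigma$ form the same set $\{v_1,\dots,v_i\}$;
\item the first vertex of $\sigma$ is $v_i$, and the $i$-th entry of $\sigma$ is $v_1$, which is adjacent to $v_i$, so $(\sigma,i)\in M(G)$;
\item the extended embedding maps $r$ to the first vertex of $\sigma$.
\end{itemize}
Hence $(\sigma,i)\in R(T,r)$. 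The map $(\pi,i)\mapsto(\sigma,i)$ is injective, since $i$ is retained and swapping positions $1$ and $i$ again recovers $\pi$. Therefore the number of non-exceptional pairs is at most $|R(T,r)|$, and the lemma follows.

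The main point needing care is the definition of the exceptional pair. We must ensure that for all other $i$, the copy of $T-r$ can be found inside the prefix of length $i-1$, so that $v_i$ is free to host $r$. Choosing $i_0$ as the minimal prefix length makes this automatic. The hypothesis $t\geq3$ ensures that $T-r$ is a tree with at least two vertices and $r'$ is well defined; no other use of it seems to be needed.
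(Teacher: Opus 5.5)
Your proposal is correct and follows the paper's proof. You discard at most one minimal-index pair per ordering, then inject the remaining pairs into $R(T,r)$ by a rearrangement that keeps $i$ and the prefix set $\{v_1,\dots,v_i\}$ while moving $v_i$ to the front. The only differences are cosmetic: you transpose positions $1$ and $i$ where the paper reverses the whole prefix $v_1,\dots,v_i$, and your exceptional set is a subset of the paper's $X$.
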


\begin{proof}
Let $X$ be the set of all pairs $(\pi,j)\in R(T-r,r')$ such that $j$ is the minimum integer $i$ (if any exist) such that  $(\pi,i)\in R(T-r,r')$. 
So $|X|\leq |L(G)|=n!$. 

Consider $(\pi,i)  \in R(T-r,r')\setminus X$, where $\pi = (v_1,\dots,v_n)$. 
Thus $(\pi,j)\in R(T-r,r')$ for some $j\in\{2,\dots,i-1\}$. 
Define 
\[f((\pi,i)) := (( v_i,v_{i-1}\,\dots,v_1,v_{i+1},v_{i+2},\dots,v_n),i).\]
In this ordering, the first vertex (namely $v_i$) is adjacent to the $i$-th vertex (namely $v_1$), so $f((\pi,i)) \in M(G)$. 
By construction, $G[\{v_1,\dots,v_j\}]$ contains $T-r$ with $r'$ mapped to $v_1$, and $j\leq i-1$. 
Thus $G[\{v_1,\dots,v_i\}]$ contains $T$ with $r$ mapped to $v_i$. 
Hence $f((\pi,i))\in R(T,r)$. 

We have shown that  $f$ is a function from $R(T-r,r')\setminus X$ to $R(T,r)$. 
We claim that $f$ is injective. 
Suppose that $f((\pi,i))=f((\pi',i'))$ for some $(\pi,i),(\pi',i')\in R(T-r,r')\setminus X$. 
Say $\pi=(v_1,\dots,v_n)$ and $\pi'=(w_1,\dots,w_n)$. 
So $(\pi,j) \in X$ and $ (\pi',j') \in X$ for some $j\in\{2,\dots,i-1\}$ and $j'\in\{2,\dots,i'-1\}$. 
By construction,
\begin{align*}
f((\pi,i)) & = (( v_i,v_{i-1}\,\dots,v_1,v_{i+1},v_{i+2},\dots,v_n),i)\\
= f((\pi',i')) & = (( w_{i'},w_{i'-1}\,\dots,w_1,w_{i'+1},w_{i'+2},\dots,w_n),i').
\end{align*}
Thus $i=i'$ and 
$v_i,v_{i-1}\,\dots,v_1=w_{i'},w_{i'-1}\,\dots,w_1$, implying
$v_{i+1},v_{i+2},\dots,v_n=\linebreak w_{i'+1},w_{i'+2},\dots,w_n$.
Together this shows that $(\pi,i) = (\pi',i')$. 
Therefore $f$ is an injection from $R(T-r,r')\setminus X$ to $R(T,r)$. 

Thus $|R(T-r,r')| - |X|=|R(T-r,r')\setminus X| \leq |R(T,r)|$. 
Hence $|R(T-r,r')|  \leq |X| + |R(T,r)| \leq n! + |R(T,r)|$, as desired. 
\end{proof}

\begin{lem}
\label{NonLeaf}
For any graph $G$ with $n$ vertices, for any tree $T$ with at least three vertices, for any non-leaf vertex $r\in V(T)$, for any subtrees $T_1$ and $T_2$ of $T$ such that $T_1\cup T_2=T$ and $V(T_1\cap T_2)=\{r\}$, 
\[ |R(T_1,r)| + |R(T_2,r)| \leq |M(G)| + |R(T,r)| + n!. \]
\end{lem}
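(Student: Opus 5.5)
The plan is to prove the inequality one ordering at a time, after grouping orderings into suitable classes, in the same spirit as \cref{Leaf}. The first observation is monotonicity. For a fixed $\pi=(v_1,\dots,v_n)$, write $N(\pi)=\{i: v_1v_i\in E(G)\}$. If $G[\{v_1,\dots,v_j\}]$ contains $T_k$ with $r\mapsto v_1$, then so does every longer prefix. Hence there is a threshold $a_k(\pi)$ (set to $\infty$ if no prefix works) such that $(\pi,i)\in R(T_k,r)$ if and only if $i\in N(\pi)$ and $i\geq a_k(\pi)$. Writing $R_k(\pi)$ for this fibre, we get
\[
|R_1(\pi)|+|R_2(\pi)| = |N(\pi)| + |N(\pi)\cap[\max(a_1,a_2),n]| .
\]
Summing $|N(\pi)|$ over $\pi$ gives exactly $|M(G)|$. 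So it suffices to bound the overlap term $\sum_\pi |R_1(\pi)\cap R_2(\pi)|$ by $|R(T,r)|+n!$. This overlap is the only obstacle: $T_1$ and $T_2$ may each embed into a prefix with $r\mapsto v_1$, yet their images may intersect outside $v_1$, so the prefix need not contain $T$.

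To handle the overlap, I would use a rotation argument. Suppose $(\pi,i)\in R_1(\pi)$ with $a=a_1(\pi)<i$. Let $T_1$ embed into $\{v_1,\dots,v_a\}$, and form
\[
\sigma=(v_1,v_{a+1},\dots,v_n,v_2,\dots,v_a).
\]
If $(\sigma,i-a+1)\in R(T_2,r)$, then $T_2$ embeds at $v_1$ into $\{v_1,v_{a+1},\dots,v_i\}$. This image meets the image of $T_1$ only in $v_1$, so their union is a copy of $T$ in $G[\{v_1,\dots,v_i\}]$ with $r\mapsto v_1$, and therefore $(\pi,i)\in R(T,r)$. Every index $i\in N(\pi)$ with $i>a$ corresponds to an index of $\sigma$ that is still adjacent to $v_1$. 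The excess should then be charged to at most one pair per ordering, namely the pair at the threshold $i=a_1(\pi)$, which is the source of the $n!$ term, exactly as the set $X$ in \cref{Leaf}.

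To make the accounting exact, I would fix $v_1$ and a cyclic order $C$ of $V(G)\setminus\{v_1\}$. I would then consider the class of $n-1$ orderings obtained by taking $v_1$ followed by a rotation of $C$. Every $\sigma$ above lies in the same class as $\pi$, and these classes partition $L(G)$. Within one class, each orientation of each edge $v_1w$ appears once per rotation. I would therefore prove the class-wise inequality
\[
\sum_{\rho}\bigl(|R_1(\rho)|+|R_2(\rho)|\bigr)\leq \sum_\rho |N(\rho)| + \sum_\rho |R_T(\rho)| + (n-1),
\]
where $\rho$ ranges over the rotations in the class. Summing over the $n\cdot (n-2)!$ classes gives the lemma.

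For the class-wise inequality, I would view each $R_k(\rho)$ as a set of edges $v_1w$, namely those $w$ lying after the $T_k$-threshold when reading $C$ starting at the rotation point. A $T_1$-embedding on an arc together with a $T_2$-embedding on the complementary arc gives $T$ on the union. The main difficulty I expect is verifying that the map $(\pi,i)\mapsto(\sigma,i-a+1)$, or a variant of it, is injective when restricted to pairs counted in $R_1\cap R_2$ but not in $R_T$. The reason is that $a=a_1(\pi)$ depends on $\pi$, so distinct pairs could collide. To resolve this, I would first try to choose $a$ canonically, as the minimal threshold, and then recover $\pi$ from $\sigma$ together with the index.
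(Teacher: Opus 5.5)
\textbf{Gap 1: the reduction is false.} Your per-ordering formula is only an inequality. The true identity is $|R_1(\pi)|+|R_2(\pi)|=|R_1(\pi)\cup R_2(\pi)|+|R_1(\pi)\cap R_2(\pi)|$. Replacing $|R_1(\pi)\cup R_2(\pi)|$ by $|N(\pi)|$ throws away the neighbour indices lying below both thresholds, i.e.\ the pairs in $M(G)\setminus(R(T_1,r)\cup R(T_2,r))$. That slack is indispensable, and the bound you reduce to fails. Take $G=K_4$, let $T$ be the path on five vertices with $r$ its middle vertex, and let $T_1,T_2$ be the two three-vertex paths ending at $r$. Every pair $(\pi,3)$ and $(\pi,4)$ lies in $R(T_1,r)\cap R(T_2,r)$, so the overlap is $2\cdot 4!=48$. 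But $R(T,r)=\emptyset$ and $n!=24$. The lemma itself holds here with equality, $96=72+0+24$.

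So the right-hand side has to retain a term like $|M(G)\setminus R(T_2,r)|$. Your observation that $(\sigma,i-a+1)\in R(T_2,r)$ would force $(\pi,i)\in R(T,r)$ is precisely the paper's key point. What it produces, though, is a map from $R(T_1,r)\setminus R(T,r)$, minus at most one pair per ordering, into $M(G)\setminus R(T_2,r)$. If that map is injective, you get $|R(T_1,r)|-|R(T,r)|-n!\leq |M(G)|-|R(T_2,r)|$, which is the lemma directly. No overlap bound is needed, and none is available.

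\textbf{Gap 2: the rotation map is not injective.} Taking $a=a_1(\pi)$ minimal, which you already do, does not fix this, because rotating the entire tail sends $v_2,\dots,v_a$ to the end of $\sigma$. Suppose $v_n$ is isolated in $G$. Then $\pi'=(v_1,v_n,v_2,\dots,v_{n-1})$ has threshold $a+1$, the pair $(\pi',i+1)$ lies in your domain whenever $(\pi,i)$ does, and both map to $(\sigma,i-a+1)$. Concretely, use $K_4$ plus an isolated vertex $y$, the same $T$, $\pi=(x_1,x_2,x_3,x_4,y)$ and $i=4$.

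The missing idea is to cycle only positions $2,\dots,i$ and leave the rest in place. Let $j$ be the least index such that $(\pi,j)\in R(T_1,r)\setminus R(T,r)$; the paper uses $f((\pi,i))=((v_1,v_{j+1},\dots,v_i,v_2,\dots,v_j,v_{i+1},\dots,v_n),i-j+1)$. If two pairs collide and $j\leq j'$, the first $j$ entries of $\pi$ and $\pi'$ agree. Hence $(\pi',j)\in R(T_1,r)\setminus R(T,r)$, minimality of $j'$ gives $j=j'$, and the two pairs are equal.

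This partial rotation does not preserve the cyclic order of $V(G)\setminus\{v_1\}$, so your rotation classes are incompatible with it. Your class-wise inequality is just the lemma restricted to a class, and it is left unproved.
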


\begin{proof}
Let $X$ be the set of all pairs $(\pi,j)\in R(T_1,r) \setminus R(T,r)$ such that $j$ is the minimum integer $i$ (if any exist) such that  $(\pi,i)\in R(T_1,r) \setminus R(T,r)$. 
For each $\pi\in L(G)$ there is at most one pair $(\pi,j)$ in $X$. 
So $|X|\leq |L(G)|=n!$. 

Consider $(\pi,i) \in ( R(T_1,r) \setminus R(T,r) ) \setminus X$, where $\pi=(v_1,\dots,v_n)$. 
Thus $(\pi,j)\in X$ for some $j\in\{2,\dots,i-1\}$. 
Define
\[f((\pi,i)) := (( v_1,v_{j+1},\dots,v_i, v_2,\dots,v_j, v_{i+1},\dots,v_n),i-j+1).\] 
Since $j\leq i-1$ we have $i-j+1\geq 2$. 
In this ordering, the first vertex (namely $v_1$) is adjacent to the $(i-j+1)$-th vertex (namely $v_i$), so $f((\pi,i)) \in M(G)$. 

Suppose that $f((\pi,i))\in R(T_2,r)$. 
Then $G[\{v_1,v_{j+1},\dots,v_i\}]$ contains $T_2$ with $r$ mapped to $v_1$. 
Since $G[\{v_1,v_2,\dots,v_j\}]$ contains $T_1$ with $r$ mapped to $v_1$,  
this implies that $G[\{v_1,\dots,v_i\}]$ contains $T$ with $r$ mapped to $v_1$. 
Hence  $(\pi,i)\in R(T,r)$, which is a contradiction. 
So $f((\pi,i)) \not\in R(T_2,r)$. 

We have shown that  $f$ is a function from $( R(T_1,r) \setminus R(T,r) ) \setminus X$ to $M(G) \setminus R(T_2,r)$. 
We claim that $f$ is injective. 
Suppose that $f((\pi,i))=f((\pi',i'))$ for some $(\pi,i),(\pi',i')\in (R(T_1,r) \setminus R(T,r) ) \setminus X$. 
Say $\pi=(v_1,\dots,v_n)$ and $\pi'=(w_1,\dots,w_n)$. 
So $(\pi,j) \in X$ and $ (\pi',j') \in X$ for some $j\in\{2,\dots,i-1\}$ and $j'\in\{2,\dots,i'-1\}$. 
By construction,
\begin{align*}
f((\pi,i)) & = (( v_1,v_{j+1},\dots,v_i, v_2,\dots,v_j,v_{i+1},\dots,v_n),i-j+1)\\
= f((\pi',i')) & = (( w_1,w_{j'+1},\dots,w_{i'}, w_2,\dots,w_{j'}, w_{i'+1},\dots,w_n),i'-j'+1).
\end{align*}
Thus $i-j=i'-j'$ and 
$v_1,v_{j+1},\dots,v_i=w_1,w_{j'+1},\dots,w_{i'}$.
Without loss of generality, $j \leq j'$. 
Then $v_1,v_2,\dots,v_j=w_1,w_2,\dots,w_{j}$. 
Since $(\pi,j)\in X$, we have $(\pi',j)\in X$. 
By the minimality of $j'$, we have $j=j'$. 
Thus $i=i'$ and $v_2,\dots,v_j,v_{i+1},\dots,v_n=w_2,\dots,w_{j'}, w_{i'+1},\dots,w_n$. 
Together this shows that $(\pi,i)=(\pi',i')$. 
Therefore $f$ is injective. 

Thus $|( R(T_1,r) \setminus R(T,r) ) \setminus X | \leq |M(G) \setminus R(T_2,r)|$. 
By construction, $R(T,r) \subseteq R(T_1,r)$ and $R(T_2,r) \subseteq M(G)$. 
Thus
\[ | R(T_1,r)|  - | R(T,r) |  - |X | = 
|( R(T_1,r) \setminus R(T,r) ) \setminus X | \leq |M(G) \setminus R(T_2,r)| = |M(G)| - | R(T_2,r)| ,\] 
and 
$|R(T_1,r)| +  | R(T_2,r)|  \leq |M(G)| + | R(T,r) |+ |X | \leq |M(G)| +|R(T,r)| + n!$, as desired. 
\end{proof}

\begin{proof}[Proof of \eqref{MainInequality}]
We proceed by induction on $t\geq2$. 
In the base case,  $t=2$, for any $t$-vertex tree $T$ and for any $r\in V(T)$, we have 
$|M(G)| =  |R(T,r)| =  |R(T,r)| + (t-2)n!$, as desired. 
Now assume that $t\geq 3$, and $T$ is a $t$-vertex tree and $r\in V(T)$. 

First suppose that $r$ is a leaf of $T$. Let $r'$ be the neighbour of $r$ in $T$. 
By \cref{Leaf}, 
$|R(T-r,r')| \leq |R(T,r)| + n!$. 
By induction, 
\begin{align*}
|M(G)| 
 \leq |R(T-r,r')| + (t-3)n! 
& \leq |R(T,r)| +n! + (t-3)n! \\
& = |R(T,r)| + (t-2)n! ,
\end{align*}
 as desired. 

Now assume that $r$ is not a leaf of $T$. So there are subtrees $T_1$ and $T_2$ of $T$ such that $T_1\cup T_2=T$ and $V(T_1\cap T_2)=\{r\}$ and $|V(T_1)|\geq 2$ and $|V(T_2)|\geq 2$. 
By induction, 
\begin{align*}
|M(G)| & \leq |R(T_1,r)| + ( |V(T_1)|-2)n! \text{ and}\\
|M(G)| & \leq |R(T_2,r)| + ( |V(T_2)|-2)n!.
\end{align*}
Thus
\begin{align*}
2|M(G)| 
& \leq  |R(T_1,r)| + |R(T_2,r)| + ( |V(T_1)|+|V(T_2)|-4)n! \\
& =  |R(T_1,r)| + |R(T_2,r)| + ( t-3)n! .
\end{align*}
By \cref{NonLeaf},
\[ |R(T_1,r)| + |R(T_2,r)| \leq |M(G)| + |R(T,r)| + n!. \]
Hence
\begin{align*}
2|M(G)| & \leq |M(G)| + |R(T,r)| + n! + ( t-3)n! .
\end{align*} 
Therefore $|M(G)| \leq |R(T,r)|  + ( t-2)n!$, 
as desired. 
\end{proof}

%\vspace*{-2ex}
%\bibliographystyle{../../BibTeX/DavidNatbibStyle}
%\bibliography{../../BibTeX/myBibliography}
\def\soft#1{\leavevmode\setbox0=\hbox{h}\dimen7=\ht0\advance \dimen7
  by-1ex\relax\if t#1\relax\rlap{\raise.6\dimen7
  \hbox{\kern.3ex\char'47}}#1\relax\else\if T#1\relax
  \rlap{\raise.5\dimen7\hbox{\kern1.3ex\char'47}}#1\relax \else\if
  d#1\relax\rlap{\raise.5\dimen7\hbox{\kern.9ex \char'47}}#1\relax\else\if
  D#1\relax\rlap{\raise.5\dimen7 \hbox{\kern1.4ex\char'47}}#1\relax\else\if
  l#1\relax \rlap{\raise.5\dimen7\hbox{\kern.4ex\char'47}}#1\relax \else\if
  L#1\relax\rlap{\raise.5\dimen7\hbox{\kern.7ex
  \char'47}}#1\relax\else\message{accent \string\soft \space #1 not
  defined!}#1\relax\fi\fi\fi\fi\fi\fi}

\end{document}